\newcommand{\lp}{\left(}
\newcommand{\rp}{\right)}
\newcommand{\R}{\mathbb{R}}
\newcommand{\ct}{\mathcal{T}}
\newcommand{\Tm}{\ensuremath{\nu_\text{merge}}}
\newcommand{\num}{\ensuremath{\nu_\text{merge}}}
\newcommand\Tstrut{\rule{0pt}{2.4ex}}
\newcommand{\prop}[2]{\textsf{#1}(#2)}
\newcommand{\pp}[2]{\frac{\partial #1}{\partial #2}}
\newcommand{\vect}[1]{\mathbf{#1}}
\DeclareMathOperator*{\argmin}{arg\,min}  
\newcommand{\nmax}{N}
\newcommand{\bigo}{\mathcal{O}}
\newcommand{\child}[1]{\textsf{child}$_{#1}$}
\newtheorem{thm}{Theorem}[section]
\crefname{hypothesis}{Hypothesis}{Hypotheses}
\title{An adaptive partition of unity method for multivariate Chebyshev polynomial  approximations \thanks{Submitted to the editors May 1, 2018.
\funding{This research was supported by National Science Foundation grant DMS-1412085.}}}
\author{Kevin W. Aiton, Tobin A. Driscoll}
\begin{document}

\maketitle

\begin{abstract}
  Spectral polynomial approximation of smooth functions allows real-time manipulation of and computation with them, as in the Chebfun system. Extension of the technique to two-dimensional and three-dimensional functions on hyperrectangles has mainly focused on low-rank approximation. While this method is very effective for some functions, it is highly anisotropic and unacceptably slow for many functions of potential interest. A method based on automatic recursive domain splitting, with a partition of unity to define the global approximation, is easy to construct and manipulate. Experiments show it to be as fast as existing software for many low-rank functions, and much faster on other examples, even in serial computation. It is also much less sensitive to alignment with coordinate axes. Some steps are also taken toward approximation of functions on nonrectangular domains, by using least-squares polynomial approximations in a manner similar to Fourier extension methods, with promising results. 

\end{abstract}

\begin{keywords}
  partition of unity, polynomial interpolation, Chebfun, overlapping domain decomposition, Fourier extension
\end{keywords}

\begin{AMS}
  	65L11, 65D05, 65D25
\end{AMS}

\section{Introduction}
\label{sec:introduction}

A distinctive and powerful mode of scientific computation has emerged recently in which mathematical functions are represented by high-accuracy numerical analogs, which are then manipulated or analyzed numerically using a high-level toolset~\cite{Trefethen2015}. The most prominent example of this style of computing is the open-source Chebfun project~\cite{battles2004extension,Driscoll2014}. Chebfun, which is written in MATLAB, samples a given piecewise-smooth univariate function at scaled Chebyshev nodes and automatically determines a Chebyshev polynomial interpolant for the data, resulting in an approximation that is typically within a small multiple of double precision of the original function. This approximation can then be operated on and analyzed with algorithms that are fast in both the asymptotic and real-time senses. Notable operations include rootfinding, integration, optimization, solution of initial- and boundary-value problems, eigenvalues of differential and integral operators, and solution of time-dependent PDEs.

Townsend and Trefethen extended the 1D Chebfun algorithms to 2D functions over rectangles in Chebfun2~\cite{townsend2013extension,Townsend2014}, which uses low-rank approximations in an adaptive cross approximation. The construction and manipulation of 2D approximations is suitably fast for a wide range of smooth examples. Most recently, Hashemi and Trefethen created an extension of Chebfun called Chebfun3 for 3D approximations on hyperrectangles using low-rank ``slice--Tucker'' decompositions~\cite{Hashemi2017}. The range of functions that Chebfun3 can cope with in a reasonable interactive computing time is somewhat narrower than for Chebfun2, as one would expect.

One aspect of the low-rank approximations used by Chebfun2 and Chebfun3 is that they are highly anisotropic. That is, rotation of the coordinate axes can transform a rank-one or low-rank function into one with a much higher rank, greatly increasing the time required for function construction and manipulations. This issue is considered in detail in~\cite{trefethen2017cubature}. 

An alternative to Chebfun and related projects ported to other languages is sparse grid interpolation. Here one uses linear or polynomial interpolants on hierarchical Smolyak grids. Notable examples of software based on this technique are the Sparse Grid Interpolation Toolbox~\cite{Klimke2005} and the Sparse Grids Matlab Kit~\cite{Back2011}. An advantage of these packages is that they are capable of at least medium-dimensional representations on hyperrectangles. However, they seem to be less focused on high-accuracy approximation for a wide range of functions, and they are less fully featured than the Chebfun family. These methods are also highly nonisotropic.

In this work we propose decomposing a hyperrectangular domain by adaptive, recursive bisections in one dimension at a time, generalizing earlier work in one dimension \cite{Aiton2018}. The resulting subdomains are defined to be overlapping, and on each we employ simple tensor-product Chebyshev polynomial interpolants. In order to define a global smooth approximation, we use a partition of unity to blend together the subdomains. This allows the approximation to capture highly localized function features while remaining computationally tractable.

The more general problem of approximation of a function with high pointwise accuracy over a nonrectangular domain $\Omega \subset \R^d$ allows more limited global options than in the hyperrectangular case. Neither low-rank nor sparse grid approximations have any clear global generalizations to this case. Two techniques that can achieve spectral convergence for at least some such domains are radial basis functions~\cite{Fornberg2015} and Fourier extension or continuation~\cite{adcock2014resolution}, but neither has been conclusively demonstrated to operate with high speed and reliability over a large collection of domains and functions.

Our use of an adaptive decomposition allows us to approximate on such domains with great flexibility. If a base subdomain is hyperrectangular, we proceed with a tensor-product interpolation for speed, but if its intersection with the global domain is nonrectangular, we can opt for a different representation. We need not be concerned with having a very large number of degrees of freedom in any local subproblem, since further subdivision is available, so the local algorithm need not be overly sophisticated.

The adaptive construction of function approximations is based on binary trees, as explained in section~\ref{sec:construction}. In section~\ref{sec:operations} we describe fast algorithms for evaluation, arithmetic combination, differentiation, and integration of the resulting tree-based approximations. Numerical experiments over hyperrectangles in section~\ref{sec:numerical_experiments} demonstrate that the tree-based approximations exhibit far less anisotropy than do Chebfun2 and Chebfun3. Our implementation is faster than Chebfun2 and Chebfun3 on all tested examples---sometimes by orders of magnitude---except for examples of very low rank, for which all the methods are acceptably fast. In section~\ref{sec:general-domain} we describe and demonstrate approximation on nonrectangular domains using a simple linear least-squares approximation by the tensor-product Chebyshev basis. While these results are preliminary, we think they show enough promise to merit further investigation.

\section{Adaptive construction}
\label{sec:construction}

Let $\Omega = \{ \vect{x} \in \R^d: x_i \in [a_i,b_i], i=1,\ldots,d\}$ be a hyperrectangle, and suppose we wish to approximate $f:\Omega \to \R$. Our strategy is to cover $\Omega$ with overlapping subdomains, on each of which $f$ is well-approximated by a multivariate polynomial, and use a partition of unity to construct a global approximation. We defer a description of the partition of unity scheme to section~\ref{sec:operations}. In this section we describe an adaptive procedure for obtaining the overlapping domains and individual approximations over them. 

The domains are constructed from recursive bisections of $\Omega$ into nonoverlapping hyperrectangular \emph{zones}. Given a zone $\prod_{j=1}^d [\alpha_{j},\beta_{j}]$, we extend it to a larger domain $\prod_{j=1}^d [\bar{\alpha}_{j},\bar{\beta}_{j}]$ by fixing a parameter $t>0$, defining
\begin{equation}
  \label{eq:overlap}
  \delta_{j} =  \frac{\beta_{j}-\alpha_{j}}{2}(1+t),\quad j=1,\ldots,d,
\end{equation}
and then setting
\begin{equation}
  \bar{\alpha}_{j} = \max\{a_j,\beta_{j}-\delta_{j}\}, \quad \bar{\beta}_{j} = \min\{\alpha_{j}+\delta_{j},b_j\}.
  \label{eq:zone_extend}
\end{equation}
In words, the zone is extended on all sides by an amount proportional to its width in each dimension, up to the boundary of the global domain $\Omega$.

We define a binary tree $\ct$ with each node $\nu$ having the following properties:
\begin{itemize}
\item \prop{zone}{$\nu$}: zone associated with $\nu$
\item \textsf{domain}($\nu$): domain associated with $\nu$
\item \textsf{isdone}($\nu$): $n$-vector of boolean values, where $\textsf{isdone}_j$ indicates whether the domain is determined to be sufficiently resolved in the $j$th dimension
\item \child{0}($\nu$),\child{1}($\nu$): left and right subtrees of $\nu$ (empty for a leaf)
\item \textsf{splitdim}($\nu$): the dimension in which $\nu$ is split (empty for a leaf)
\end{itemize}

\noindent A leaf node has the following additional properties:
\begin{itemize}
\item \textsf{grid}($\nu$): tensor-product grid of Chebyshev 2nd-kind points mapped to \prop{domain}{$\nu$}
\item \textsf{values}($\nu$): function values at \textsf{grid}($\nu$)
\item \textsf{interpolant}($\nu$): polynomial interpolant of \textsf{values}($\nu$) on \textsf{grid}($\nu$)
\end{itemize}

\noindent If $\nu$ is a leaf, its domain is constructed by extending \prop{zone}{$\nu$} as in~(\ref{eq:zone_extend}). Otherwise, \prop{domain}{$\nu$} is the smallest hyperrectangle containing the domains of its children. 

Let $f$ be the scalar-valued function on $\Omega$ that we wish to approximate. A key task is to compute, for a given leaf node $\nu$, the polynomial \textsf{interpolant}($\nu$), and determine whether $f$ is sufficiently well approximated on \textsf{domain}($\nu$) by it. First we sample $f$ at a Chebyshev grid of size $\nmax^d$ on \textsf{domain}($\nu$). This leads to the interpolating polynomial
\begin{equation}
  \label{eq:full-interp}
  \tilde{p}(\vect{x}) = \sum_{i_1=0}^{\nmax-1} \cdots \sum_{i_d=0}^{\nmax-1}  C_{i_1,\ldots,i_d} T_{i_1}(x_1)\cdots T_{i_d}(x_d),
\end{equation}
where the coefficient array $C$ can be computed by FFT in $\bigo(\nmax^d \log \nmax)$ time~\cite{mason2002chebyshev}. Following the practice of Chebfun3t \cite{Hashemi2017}, for each $j=1,\ldots,d$, we define a scalar sequence $\gamma^{(j)}$ by summing $|C_{i_1,\ldots,i_d}|$ over all dimensions except the $j$th. To each of these sequences we apply Chebfun's {\tt StandardChop}  algorithm, which attempts to measure decay in the coefficients in a suitably robust sense~\cite{Aurentz:2017:CCS:3034774.2998442}. Let the output of {\tt StandardChop} for sequence $\gamma^{(j)}$ be $n_j$; this is the degree that {\tt StandardChop} deems to be sufficient for resolution at a user-set tolerance.  If $n_j<\nmax$ we say that the function is resolved in dimension $j$ on $\nu$. If $f$ is resolved in all dimensions on $\nu$, then we truncate the interpolant sums in~(\ref{eq:full-interp}) at the degrees $n_j$ and store the samples of $f$ on the corresponding smaller tensor-product grid. 

\begin{algorithm}
\caption{refine($\nu$,$f$,$\nmax$,$t$)}
\label{alg:refine}
\begin{algorithmic}
  \IF{$\nu$ is a leaf}
    \STATE Sample $f$ on \textsf{grid}($\nu$)
    \STATE Determine chopping degrees $n_1,\ldots,n_d$
    \FOR{each $j$ with \textsf{isdone}($\nu$)$_j=$ FALSE}
      \IF{$n_j<\nmax$}
        \STATE \textsf{isdone}($\nu$)$_j$ := TRUE
      \ELSE
        \STATE split($\nu$,$j$,$t$)
      \ENDIF
    \ENDFOR
    \IF{all \textsf{isdone}($\nu$) are TRUE}
      \STATE Truncate~(\ref{eq:full-interp}) at degrees $n_1,\ldots,n_d$ to define \textsf{grid}($\nu$), \textsf{values}($\nu$), \textsf{interpolant}($\nu$)
    \ELSE
      \STATE refine($\nu$,$f$,$\nmax$,$t$)
    \ENDIF
  \ELSE
    \STATE refine(\child{0}($\nu$),$f$,$\nmax$,$t$)      
    \STATE refine(\child{1}($\nu$),$f$,$\nmax$,$t$)
  \ENDIF
\end{algorithmic}
\end{algorithm}

Algorithm~\ref{alg:refine}  describes a recursive adaptation procedure for building the binary tree $\ct$, beginning with a root node whose zone and domain are both the original hyperrectangle $\Omega$. For a non-leaf input, the algorithm is simply called recursively on the children. For an input node that is currently a leaf of the tree, the function $f$ is sampled, and chopping is used in each unfinished dimension to determine whether sufficient resolution has been achieved. Each dimension that is deemed to be resolved is marked as finished. If all dimensions are found to be finished, then the interpolant is chopped to the minimum necessary length in each dimension, and the node will remain a leaf. Otherwise, the node is split in all unfinished dimensions using Algorithm~\ref{alg:split}, and Algorithm~\ref{alg:refine} is applied recursively. Note that the descendants of a splitting inherit the \textsf{isdone} property that marks which dimensions have been finished, so no future splits are possible in such dimensions within this branch. 

\begin{algorithm}
\caption{split($\nu$,$j$,$t$)}
\label{alg:split}
\begin{algorithmic}
\IF{$\nu$ is a leaf}
\STATE \textsf{splitdim}($\nu$)=$j$
\STATE Define new nodes $\nu_0$, $\nu_1$
\STATE $[a_1,b_1],[a_2,b_2],\dots,[a_n,b_n]$ be the subintervals from \prop{zone}{$\nu$}
\STATE Let $m:= \frac{b_j+a_j}{2}$
\STATE Let \textsf{zone}($\nu_0$) $:= [a_1,b_1] \times \dots \times [a_{j-1},b_{j-1}] \times [a_{j},m] \times [a_{j+1},b_{j+1}] \times \dots \times [a_{d},b_{d}] $
\STATE Let \textsf{zone}($\nu_1$) $:= [a_1,b_1] \times \dots \times [a_{j-1},b_{j-1}] \times [m,b_{j}] \times [a_{j+1},b_{j+1}] \times \dots \times [a_{d},b_{d}] $
\FOR{$k=0,1$}
\STATE Define \textsf{domain}($\nu_k$) from \textsf{zone}($\nu_k$) with parameter $t$ as in~(\ref{eq:zone_extend})
\STATE Define \textsf{grid}($\nu_k$) as Chebyshev tensor-product grid of size $\nmax^d$ in \textsf{domain}($\nu_k$)
\STATE Let \textsf{isdone}($\nu_k$):= \textsf{isdone}($\nu$)
\ENDFOR
\ELSE
\STATE split(\child{0}($\nu$),$k$,$t$)
\STATE split(\child{1}($\nu$),$k$,$t$)
\ENDIF
\end{algorithmic}
\end{algorithm}

\section{Computations with the tree representation}
\label{sec:operations}

The procedure of the preceding section constructs a binary tree $\ct$ whose leaves each hold an accurate representation of $f$ over a subdomain. These subdomains overlap, and constructing a global partition of unity approximation from them is straightforward.

Define the $C^\infty$ function
\begin{align}
  \label{eq:basic-cinf}
  \psi_0(x) = \begin{cases}
    \exp \lp  1 - \frac{1}{1-x^2}\rp & |x| \leq 1, \\
    0 & |x| > 1,
  \end{cases}
\end{align}
and let
\begin{equation}
  \label{eq:affine}
  \ell(x;a,b) = 2\frac{x-a}{b-a} - 1
\end{equation}
be the affine map from $[a,b]$ to $[-1,1]$. Suppose $\nu$ is a leaf of $\ct$ with domain $\Omega_\nu = \prod [\bar{\alpha}_j,\bar{\beta}_j]$. Then we can define the smoothed-indicator or bump function
\begin{equation}
  \label{eq:bump-functions}
  \psi_\nu(\vect{x}) = \prod_{j=1}^d \psi_0\bigl( \ell(x_j;\bar{\alpha}_{j},\bar{\beta}_{j}) \bigr).
\end{equation}
Next we use Shepard's method~\cite{wendland2004scattered} to define a partition of unity $\{w_\nu (\vect{x})\}$, indexed by the leaves of $\ct$:
\begin{equation}
  \label{eq:pu-weight}
  w_\nu(\vect{x}) = \frac{\psi_\nu(\vect{x})}{\displaystyle \sum_{\mu\in \text{leaves}(\ct)} \psi_\mu(\vect{x})}.
\end{equation}
We have $\sum_{\nu\in \text{leaves}(\ct)} w_\nu(\vect{x}) = 1$, which makes $\{w_\nu (\vect{x})\}$ a partition of unity. This implies that  $w_\nu(\vect{x})=1$ for any $\vect{x}$ that lies in $\nu$ and no other patches. Thus if we assume that weight functions are supported only in their respective domains, smoothness of the partition of unity functions requires overlap between neighboring patches.

 Let $s_\nu$ be the polynomial interpolant of $f$ over the domain of node $\nu$. Then the global partition of unity approximant is
\begin{equation}
  s(\vect{x}) = \sum_{\nu\in \text{leaves}(\ct)} w_\nu(\vect{x})s_\nu(\vect{x}).
  \label{eq:pu-approx}	
\end{equation}
Despite consisting of separate local approximations from a partitioned domain, the global approximation (\ref{eq:pu-approx}) remains infinitely smooth while avoiding explicit global matching constraints. This permits rapid (in principle, beyond all orders) convergence to smooth functions, as well as generating continuous derivative approximations~\cite{wendland2004scattered}.

 While this approximation is globally continuous it is still local in some sense. As an example, in Figure~\ref{overlap_plot} we plot the overlapping patches on the domain of a patch $\nu$ for the partition of unity approximation of $\arctan(3(y^2+x))$ (which can be seen in Figure~\ref{TANFUN1}). We see that in the interior of the patch that the approximation (\ref{eq:pu-approx}) would consist only of the polynomial approximation $s_\nu(\vect{x})$, and in the overlap would blend neighboring approximations with the partition of unity.

\begin{figure}
\centering
\includegraphics[scale = 0.5]{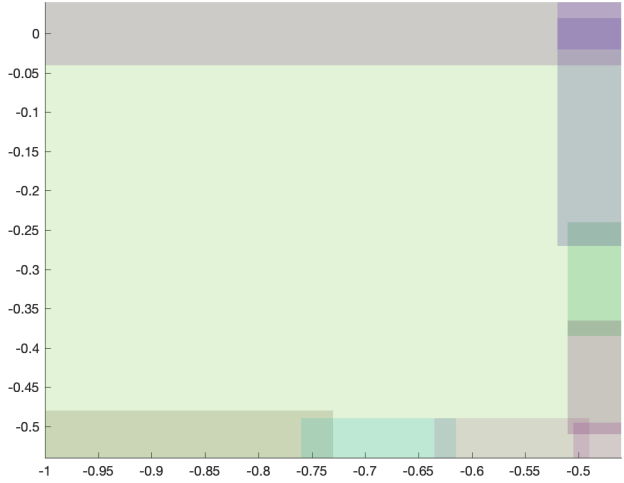}
\caption{Plot of the subdomains formed from the partition of unity method for $\arctan(3(y^2+x))$ on a local patch with domain $[-1 , -0.46] \times [-0.54 , 0.04]$.}
\label{overlap_plot}
\end{figure}

Next we describe efficient algorithms using the tree representation of the global approximant to perform common numerical operations such as evaluation at points, basic binary arithmetic operations on functions, differentiation, and integration.

\subsection{Evaluation}

Note that (\ref{eq:pu-weight})--~(\ref{eq:pu-approx}) can be rearranged into
\begin{equation}
  s(\vect{x}) = \sum_{\nu\in \text{leaves}(\ct)} \frac{s_\nu(\vect{x}) \psi_\nu(\vect{x})}{\displaystyle \sum_{\mu\in \text{leaves}(\ct)} \psi_\mu(\vect{x})}
  = \frac{\displaystyle\sum_{\nu\in \text{leaves}(\ct)} s_\nu(\vect{x}) \psi_\nu(\vect{x})}{\displaystyle \sum_{\mu\in \text{leaves}(\ct)} \psi_\mu(\vect{x})}.
  \label{eq:pu-alt}
\end{equation}
This formula suggests a recursive approach to evaluating the numerator and denominator, presented in Algorithm~\ref{alg:numden}. Using it, only leaves containing $\vect{x}$ and their ancestors are ever visited. A similar approach was described in~\cite{tobor2006reconstructing}.

\begin{algorithm}
\caption{[$S$,$P$]=numden($\nu$,$\vect{x}$)}
\label{alg:numden}
\begin{algorithmic}
\STATE $S=0$, $P=0$
\IF{$\nu$ is a leaf}
\STATE $S=\psi_\nu(\vect{x})$
\STATE $P = S \cdot \textsf{interpolant}(\nu)(\vect{x})$
\ELSE
\FOR{$k=0,1$}
\IF{$\vect{x} \in \text{\textsf{domain}(\child{k}}(\nu))$}
\STATE $[S_k,P_k]$ = numden(\child{k}($\nu$),$\vect{x}$) 
\STATE $S = S + S_k$
\STATE $P = P + P_k$
\ENDIF
\ENDFOR
\ENDIF
\end{algorithmic}
\end{algorithm}

Algorithm~\ref{alg:numden} can easily be vectorized to evaluate $s(\vect{x})$ at multiple points, by recursively calling each leaf with all values of $\vect{x}$ that lie within its domain. In the particular case when the evaluation is to be done at all points in a Cartesian grid, it is worth noting that the leaf-level interpolant in~(\ref{eq:full-interp}) can be evaluated by a process that yields significant speedup over a naive approach. As a notationally streamlined example, say that the desired values of $\vect{x}$ are $(\xi_{j_1},\ldots,\xi_{j_d})$, where each $j_k$ is drawn from $\{1,\ldots,M\}$, and that the array of polynomial coefficients is of full size $O(N^d)$. Express~(\ref{eq:full-interp}) as
\begin{multline}
  \label{eq:grid-eval}
  \sum_{i_1=0}^{\nmax-1} \cdots \sum_{i_d=0}^{\nmax-1}  C_{i_1,\ldots,i_d} T_{i_1}(\xi_{j_1})\cdots T_{i_d}(\xi_{j_d})
  \\ = \sum_{i_1=0}^{\nmax-1} T_{i_1}(\xi_{j_1}) \sum_{i_2=0}^{\nmax-1} T_{i_2}(\xi_{j_2}) \cdots \sum_{i_d=0}^{\nmax-1}  C_{i_1,\ldots,i_d} T_{i_d}(\xi_{j_d}).
\end{multline}
The innermost sum yields $N^{d-1}M$ unique values, each taking $\bigo(N)$ time to compute. At the next level there are $N^{d-2}M^2$ values, and so on, finally leading to the computation of all $M^d$ interpolant values. This takes $\bigo(MN(M+N)^{d-1})$ operations, as opposed to $\bigo(M^dN^d)$ when done naively.

\subsection{Binary arithmetic operations}

Suppose we have two approximations ${s}_1(\vect{x})$, ${s}_2(\vect{x})$, represented by trees $\ct_1$ and $\ct_2$ respectively, and we want to construct a tree approximation for ${s}_1 \circ {s}_2$, where $\circ$ is one of the operators $+$, $-$, $\times$, or $\div$. If $\ct_1$ and $\ct_2$ have identical tree structures, then it is straightforward to operate leafwise on the polynomial approximations. In the cases of multiplication and division, the resulting tree may have to be refined further using Algorithm~\ref{alg:split}, since these operations typically result in polynomials of degree greater than the operands. 

If the trees $\ct_1$ and $\ct_2$ are not structurally identical, we are free to use Algorithm~\ref{alg:split} to construct an approximation by sampling values of ${s}_1 \circ {s}_2$. However, the tree of ${s}_1 \circ {s}_2$ likely shares refinement structure with both $\ct_1$ and $\ct_2$. For example, Figure~\ref{zone_tan} shows the refined zones of the trees for $\arctan(100(x^2+y))$, $\arctan(100(x+y^2))$, and their sum. Thus in practice we merge the trees $\ct_1$ and $\ct_2$ using Algorithm~\ref{alg:merge}, presented in Appendix~\ref{sec:merge}. The merged tree, whose leaves contain sampled values of the result, may then be refined further if chopping tests then reveal that the result is not fully resolved. 

\begin{figure}
\centering
\subfloat[Zone plot of $f_1(x,y)$]{
\includegraphics[scale = 0.3]{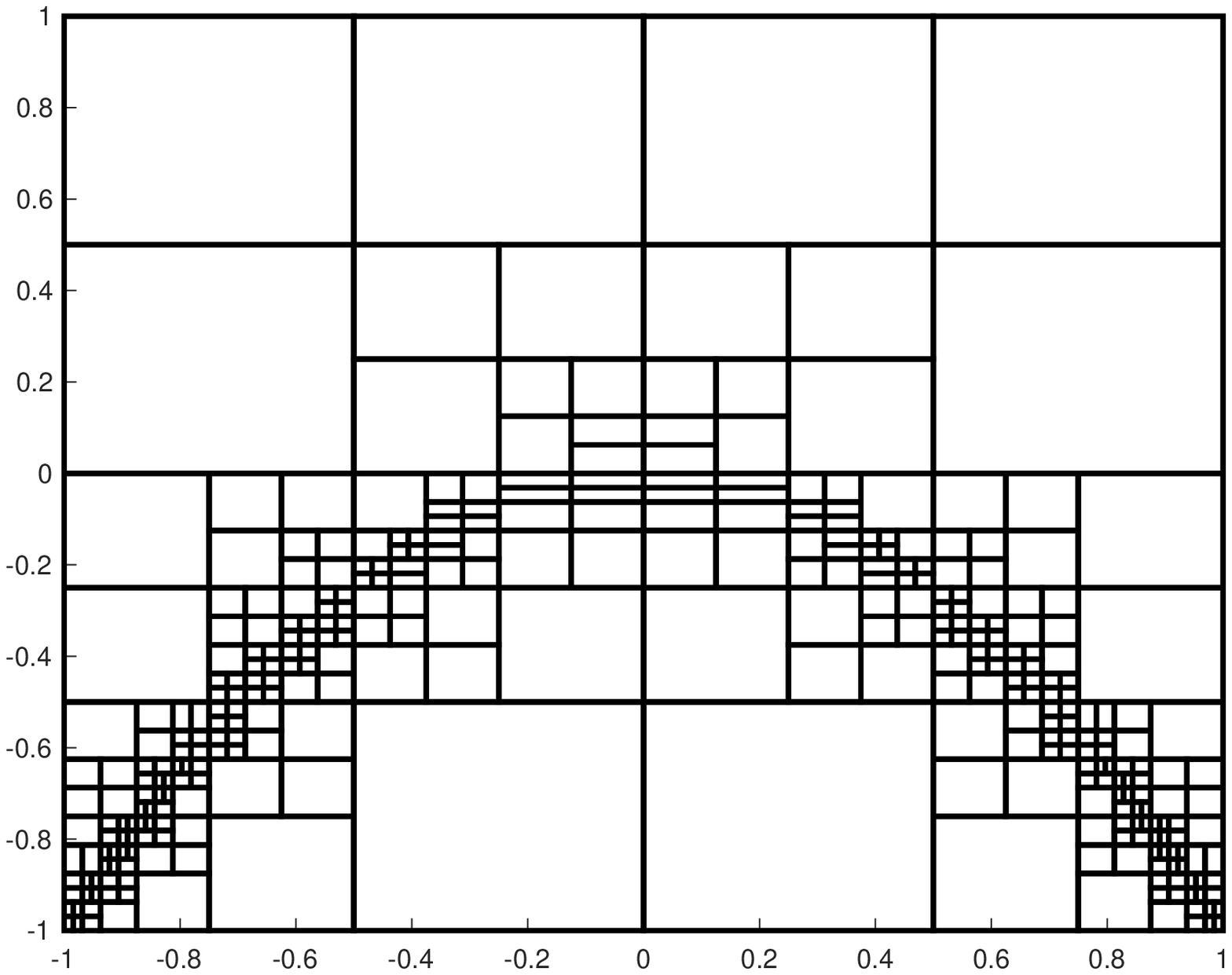}
   \label{zone_tan_a}
 }
\subfloat[Zone plot of $f_2(x,y)$]{
\includegraphics[scale = 0.3]{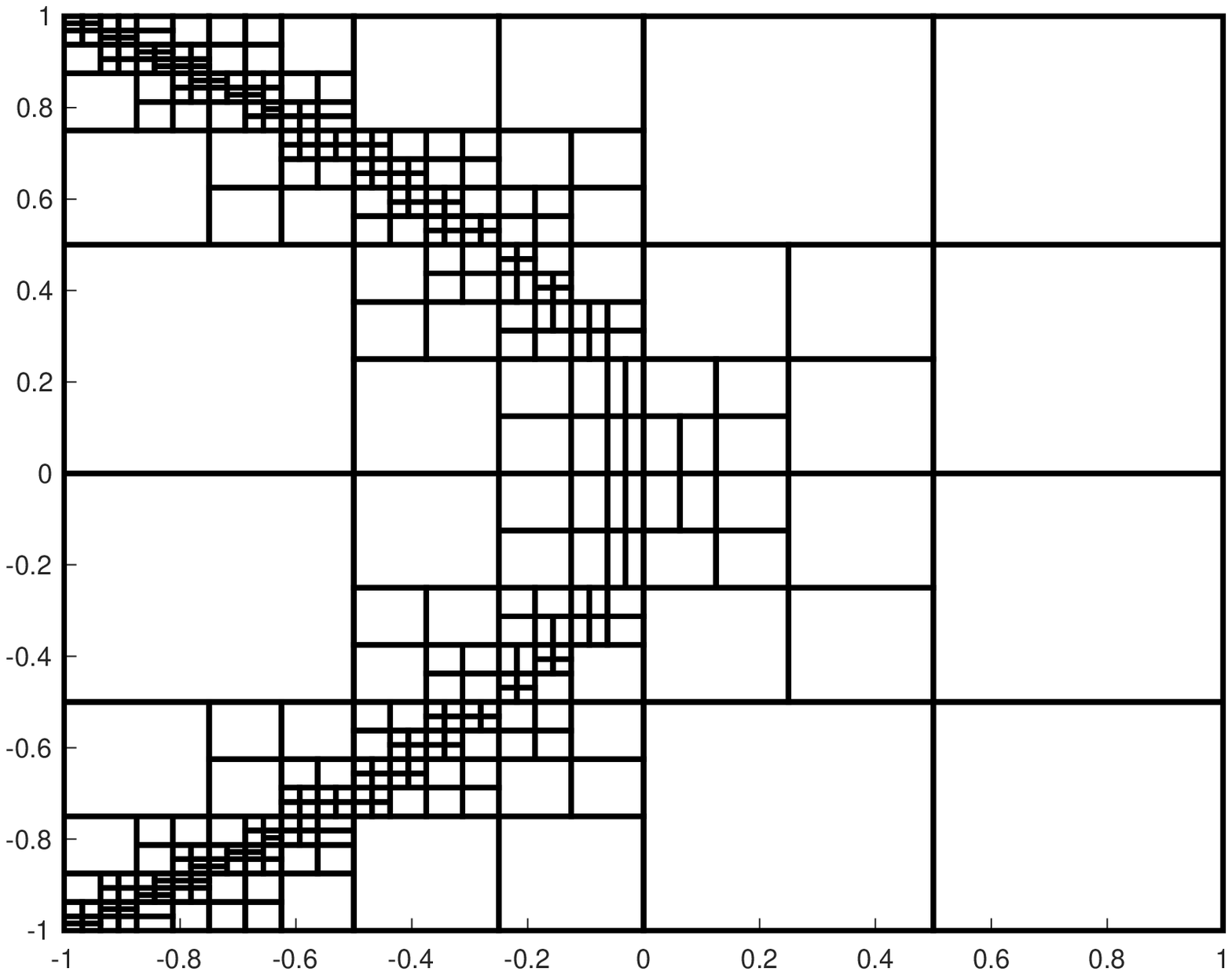}
   \label{zone_tan_b}
 }
 
 \subfloat[Zone plot of $f_1(x,y)+f_2(x,y)$]{
\includegraphics[scale = 0.3]{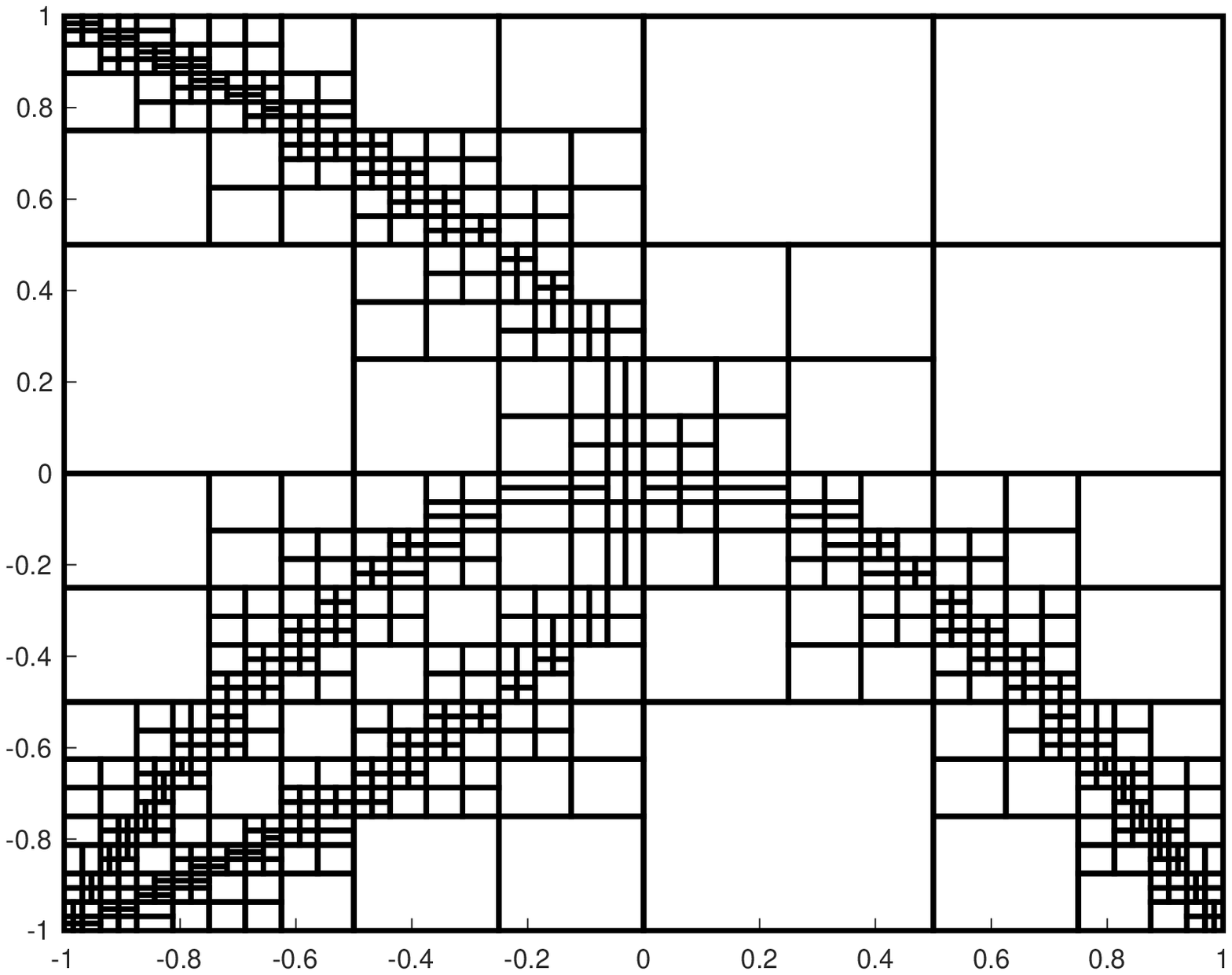}
   \label{zone_tan_c}
 }
 \caption{Zone plots for $f_1(x,y)$,$f_2(x,y)$ and $f_1(x,y)+f_2(x,y)$.}
\label{zone_tan}
\end{figure}

\subsection{Differentiation}

Differentiation of the global approximant~(\ref{eq:pu-approx}) results in two groups of terms:
\begin{equation*}
  \pp{}{x_j} s(\vect{x})=\sum_{\nu\in \text{leaves}(\ct)} w_\nu(\vect{x}) \pp{}{x_j} s_\nu(\vect{x})+\sum_{\nu\in \text{leaves}(\ct)} s_\nu(\vect{x}) \pp{}{x_j} w_\nu(\vect{x}).
\end{equation*}
The first sum is a partition of unity approximation of leafwise differentiated interpolants. That is, we simply apply standard spectral differentiation to the data stored in the leaves of $\ct$. Although it may seem surprising at first, we can define the desired derivative approximation solely in terms of this first sum, and neglect the second with little penalty.

\begin{thm}
  Define
  \begin{equation}
    \label{eq:pu-deriv}
    s^{(j)}(\vect{x})=\sum_{\nu\in \text{leaves}(\ct)} w_\nu(\vect{x}) \pp{}{x_j} s_\nu(\vect{x}).
  \end{equation}
  Then for all $\vect{x}\in\Omega$,
  \begin{equation}
    \label{eq:pu-deriv-error}
    \left|s^{(j)}(\vect{x})-\pp{f}{x_j}(\vect{x})\right| \le \sum_{\vect{x} \in \prop{domain}{\nu}} w_\nu(\vect{x}) \left| \pp{s_\nu}{x_j}(\vect{x}) - \pp{f}{x_j} (\vect{x})\right|. 
  \end{equation}
\end{thm}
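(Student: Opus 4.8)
The plan is to exploit the single defining property of a partition of unity --- that the weights sum to one --- to rewrite the error as a convex combination of the leafwise derivative errors, after which the bound follows from the triangle inequality together with nonnegativity of the weights.

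First I would fix $\vect{x}\in\Omega$ and recall from~(\ref{eq:pu-weight}) that $\sum_{\nu\in\text{leaves}(\ct)} w_\nu(\vect{x}) = 1$, with each $w_\nu(\vect{x})\ge 0$, since $\psi_0\ge 0$ and hence each $\psi_\nu\ge 0$ by~(\ref{eq:basic-cinf})--(\ref{eq:bump-functions}); the denominator in~(\ref{eq:pu-weight}) is strictly positive because $\vect{x}$ lies in at least one leaf domain and $\psi_\mu>0$ in the interior of that domain. Using $\sum_\nu w_\nu(\vect{x})=1$ I can write $\pp{f}{x_j}(\vect{x}) = \sum_{\nu} w_\nu(\vect{x})\,\pp{f}{x_j}(\vect{x})$, and subtracting this from the definition~(\ref{eq:pu-deriv}) of $s^{(j)}$ gives
\[
 s^{(j)}(\vect{x})-\pp{f}{x_j}(\vect{x}) = \sum_{\nu\in\text{leaves}(\ct)} w_\nu(\vect{x})\left(\pp{s_\nu}{x_j}(\vect{x})-\pp{f}{x_j}(\vect{x})\right).
\]
Taking absolute values, applying the triangle inequality, and pulling the nonnegative factor $w_\nu(\vect{x})$ out of each $|\cdot|$ yields the claimed inequality with the sum taken over all leaves.

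The last step is to observe that the summation range may be restricted to those leaves $\nu$ with $\vect{x}\in\prop{domain}{\nu}$, as written in~(\ref{eq:pu-deriv-error}): if $\vect{x}\notin\prop{domain}{\nu}=\prod_j[\bar{\alpha}_j,\bar{\beta}_j]$, then $x_k\notin[\bar{\alpha}_k,\bar{\beta}_k]$ for some $k$, so $|\ell(x_k;\bar{\alpha}_k,\bar{\beta}_k)|>1$ and $\psi_0(\ell(x_k;\bar{\alpha}_k,\bar{\beta}_k))=0$, whence $\psi_\nu(\vect{x})=0$ and $w_\nu(\vect{x})=0$; such terms contribute nothing to either side. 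There is essentially no hard part here --- the only points needing care are the nonnegativity of the Shepard weights and the fact that $\psi_\nu$ is genuinely supported inside $\prop{domain}{\nu}$, both immediate from~(\ref{eq:basic-cinf})--(\ref{eq:bump-functions}). It is perhaps worth remarking afterward that the estimate shows $s^{(j)}(\vect{x})$ inherits, pointwise, at worst the largest leafwise spectral differentiation error among the patches overlapping $\vect{x}$ --- which, since those errors converge rapidly for smooth $f$, is the real content of the statement and justifies discarding the $\partial w_\nu/\partial x_j$ terms.
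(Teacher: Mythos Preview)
Your proof is correct and follows exactly the same route as the paper's own argument: use $\sum_\nu w_\nu(\vect{x})=1$ to write the error as $\sum_\nu w_\nu(\vect{x})\bigl(\partial s_\nu/\partial x_j-\partial f/\partial x_j\bigr)$, apply the triangle inequality with $w_\nu\ge 0$, and drop the terms with $\vect{x}\notin\prop{domain}{\nu}$ since $w_\nu(\vect{x})=0$ there. Your version is more explicit about the nonnegativity and support of the Shepard weights, but the idea is identical.
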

\begin{proof}
  By the partition of unity property,
  \[
    s^{(j)}(\vect{x})-\pp{f}{x_j}(\vect{x}) = \sum_{\nu\in \text{leaves}(\ct)} w_\nu(\vect{x}) \left[\pp{}{x_j} s_\nu(\vect{x}) - \pp{f}{x_j}(\vect{x})\right].
  \]
  The result follows because $w_\nu(\vect{x})=0$ if $\vect{x}\notin \prop{domain}{\nu}$.
\end{proof}

Hence if $\vect{x}$ is not in an overlap region, the error in the global derivative approximation $s^{(j)}$ is the same as for the local approximant. Otherwise, it is bounded---pessimistically, since the weights are positive and sum to unity pointwise---by the sum of errors in all the contributing approximants. Since no point can be in more than $2^d$ subdomains (and then only near a meeting of hyperrectangle corners), we feel this error is acceptable in two and three dimensions.  


\subsection{Integration}

The simplest and seemingly most efficient approach to integrating over the domain is to do so piecewise over the nonoverlapping zones,
\begin{equation}
  \label{eq:integration}
  \int_{\Omega} f(\vect{x}) d\vect{x} = \sum_{\nu\in \text{leaves}(\ct)} \int_{\text{\textsf{zone}($\nu$)}} f(\vect{x}) d\vect{x}.
\end{equation}
Since the leaf interpolants are defined natively over the overlapping domains, they must be resampled at Chebyshev grids on the zones, after which Clenshaw-Curtis quadrature is applied.

\section{Numerical experiments}
\label{sec:numerical_experiments}

All the following experiments were performed on a computer with a 2.6 GHz Intel Core i5 processor in version 2017a of MATLAB. Our code, which uses a serial object-oriented recursive implementation of the algorithms, is available for download.\footnote{\url{https://github.com/kevinwaiton/PUchebfun}} Comparisons to Chebfun2 and Chebfun3 were done using Chebfun version 5.5.0. We also tried to use the Sparse Grid Interpolation Toolbox~\cite{Klimke2005}, but on all the examples we were unable to get it close to our desired error tolerances within its hard-coded limits on sparse grid depth. 

\subsection{2D experiments}

We first test the 2D functions $\log(1+(x^2+y^4)/10^{-5})$, $\arctan((x+y^2)/10^{-2})$, $\frac{10^{-4}}{(10^{-4}+x^2)(10^{-4}+y^2)}$, Franke's function \cite{franke1979critical}, the smooth functions from the Genz family test package \cite{genz1987package}, and the ``peg'' examples from~\cite{trefethen2017cubature}. For each function we record the time of construction, the time to evaluate on a $200\times 200$ grid, and the max observed error on this grid. Table~\ref{tab:timing2D} shows the results for the new method. For the low-rank test cases, the methods are comparable, with neither showing a consistent advantage; most importantly, both methods are fast enough for interactive computing. In the tests of higher-rank functions, the tree-based method exhibits a clear, sometimes dramatic, advantage in construction time. Moreover, the tree method remains fast enough for interactive computing even as the total number of nodes exceeds 1.6 million.  We present plots of the functions and adaptively generated subdomains for the first three test functions in Figures~\ref{TANFUN1}-\ref{rungeFUN1}.

\begin{table}[p]

  \begin{tabular}{c|c|c|c|c|c}
    \multirow{2}{*}{Function} & \multirow{2}{*}{Alg.} & \multirow{2}{*}{Error} & Build & Eval & Points /  \\
                            & &  & time & time & Rank  \\ \hline
    \multirow{2}{*}{$\log(1+\frac{x_1^2+x_2^4}{10^{-5}})$} &  T & 1.16$\times 10^{-15}$ &	0.525 &	0.1235 & 69800 \Tstrut  \\
                              & C & 1.14$\times 10^{-6}$ & 2.30 & 0.10 & 30 \\ \hline
    \multirow{2}{*}{$\arctan(\frac{x_1+x_2^2}{10^{-2}})$} & T & 1.83$\times 10^{-14}$ &	2.241 & 0.3590	& 917515 \Tstrut\\
                              & C & 7.09$\times 10^{-12}$ & 150 & 5.0 & 816 \\ \hline
    \multirow{2}{*}{$\frac{10^{-4}}{(10^{-4}+x_1^2)(10^{-4}+x_2^2)}$} & T & 1.86$\times 10^{-15}$ & 0.606 & 0.0728 & 	117056 \Tstrut\\
                              & C & 5.44$\times 10^{-15}$ & 0.049 & 0.0037 & 1 \\ \hline
    \multirow{2}{*}{franke} & T & 1.33$\times 10{-15}$ & 	0.061 &	0.0069 &	9270 \Tstrut\\
                              & C & 1.33$\times 10^{-15}$ & 0.020 & 0.0024 & 4 \\ \hline
    \multirow{2}{*}{$\cos(u_1\pi + \sum_{i=1}^2 a_i x_i)$} & T & 23.00$\times 10^{-15}$ &	0.007 &	0.0012 & 	972 \Tstrut\\
                              & C & 4.47$\times 10^{-14}$ & 0.016 & 0.0020 & 2 \\ \hline
    \multirow{2}{*}{$\prod_{i=1}^2 (a_i^{-2}+(x_i-u_i)^2)^{-1}$} & T & 2.01$\times 10^{-15}$ &	0.063 &	0.0099 & 21232 \Tstrut\\
                              & C & 1.59$\times 10^{-12}$ & 0.020 & 0.0022 & 1 \\ \hline
    \multirow{2}{*}{$(1+\sum_{i=1}^2 a_i x_i)^{-3}$} & T & 3.33$\times 10^{-16}$ &	0.006 &	0.0004 &	 25 \Tstrut\\
                              & C & 2.27$\times 10^{-12}$ & 0.012 & 0.0021 & 4 \\ \hline
    \multirow{2}{*}{$\exp(-\sum_{i=1}^2 a_i^2 (x_i-u_i)^2)$} & T & 7.77$\times 10{-16}$ & 0.005 &	0.0012 & 	1862 \Tstrut\\
                              & C & 4.44$\times 10^{-16}$ & 0.015 & 0.0022 & 1 \\  \hline
    \multirow{2}{*}{square peg} & T & 2.22$\times 10^{-15}$ & 0.126 &	0.0264 & 111188 \Tstrut\\
                              & C & 1.22 $\times 10^{-15}$ &	0.023 &	0.0012 & 	1 \\ \hline
    \multirow{2}{*}{tilted peg} & T & 2.00$\times 10^{-15}$ &	0.214 &	0.0375 & 117544 \Tstrut\\
                              & C & 7.68$\times 10^{-14}$ & 0.265 & 	0.0181 & 100
                              
  \end{tabular}
  \caption{Observed error and wall-clock times for the tree-based (T) and Chebfun2 (C) algorithms with target tolerance $10^{-16}$ and $\nmax=129$. Build time is for constructing the approximation object, and eval time for evaluating an approximant on a 200x200 uniform grid (all times in seconds). Also shown: for the tree-based method, the total number of stored sampled function values, and for Chebfun2, the numerically determined rank of the function. Here $u=[0.75,0.25]$ and $a=[5,10]$.}
  \label{tab:timing2D}
\end{table}


\begin{figure}
  \centering
  \subfloat[$\arctan \lp (x+y^2)/0.01 \rp$]{
    \includegraphics[scale = 0.34]{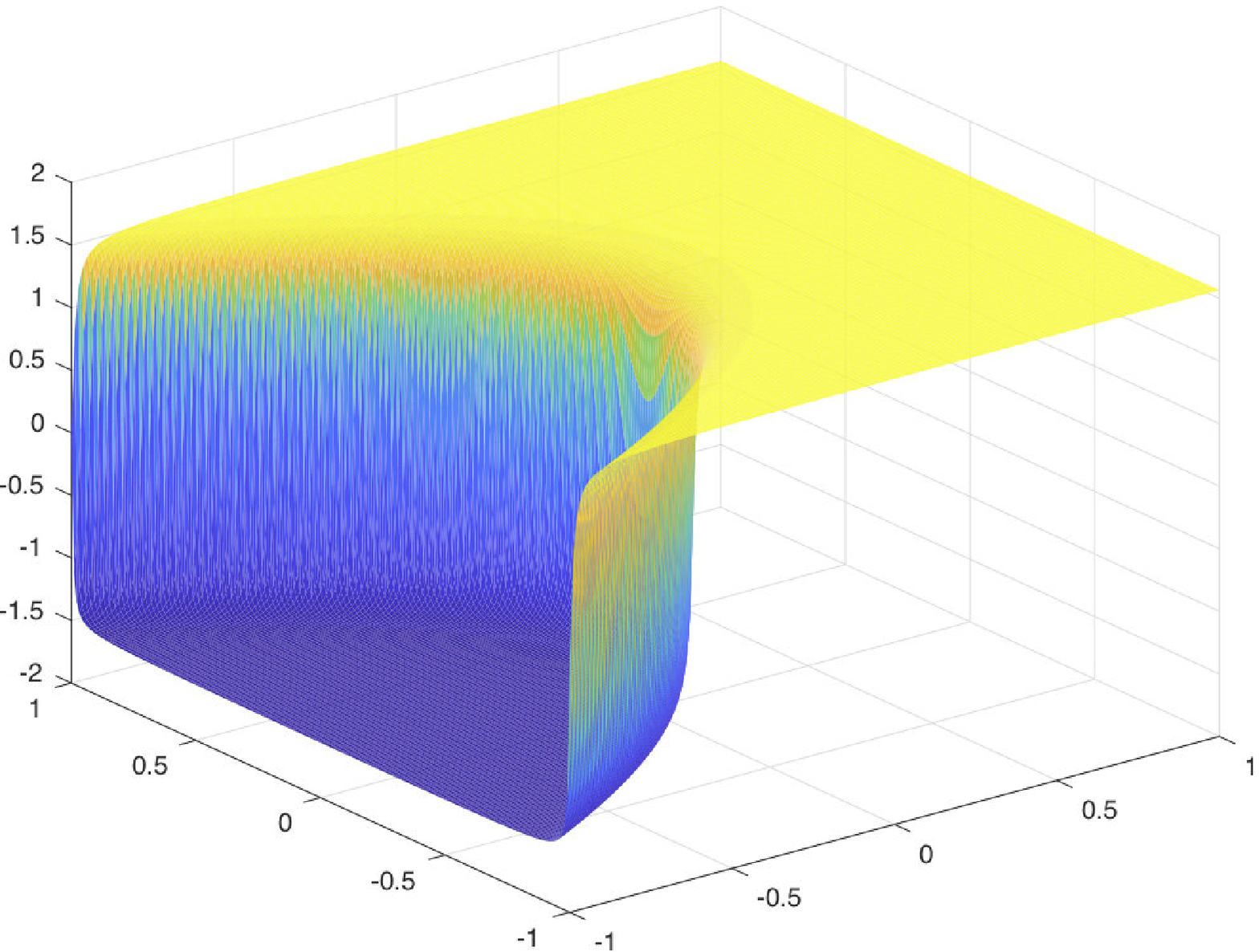}
    \label{tanfunplot}
  }
  \subfloat[Overlapping subdomains]{
    \includegraphics[scale = 0.34]{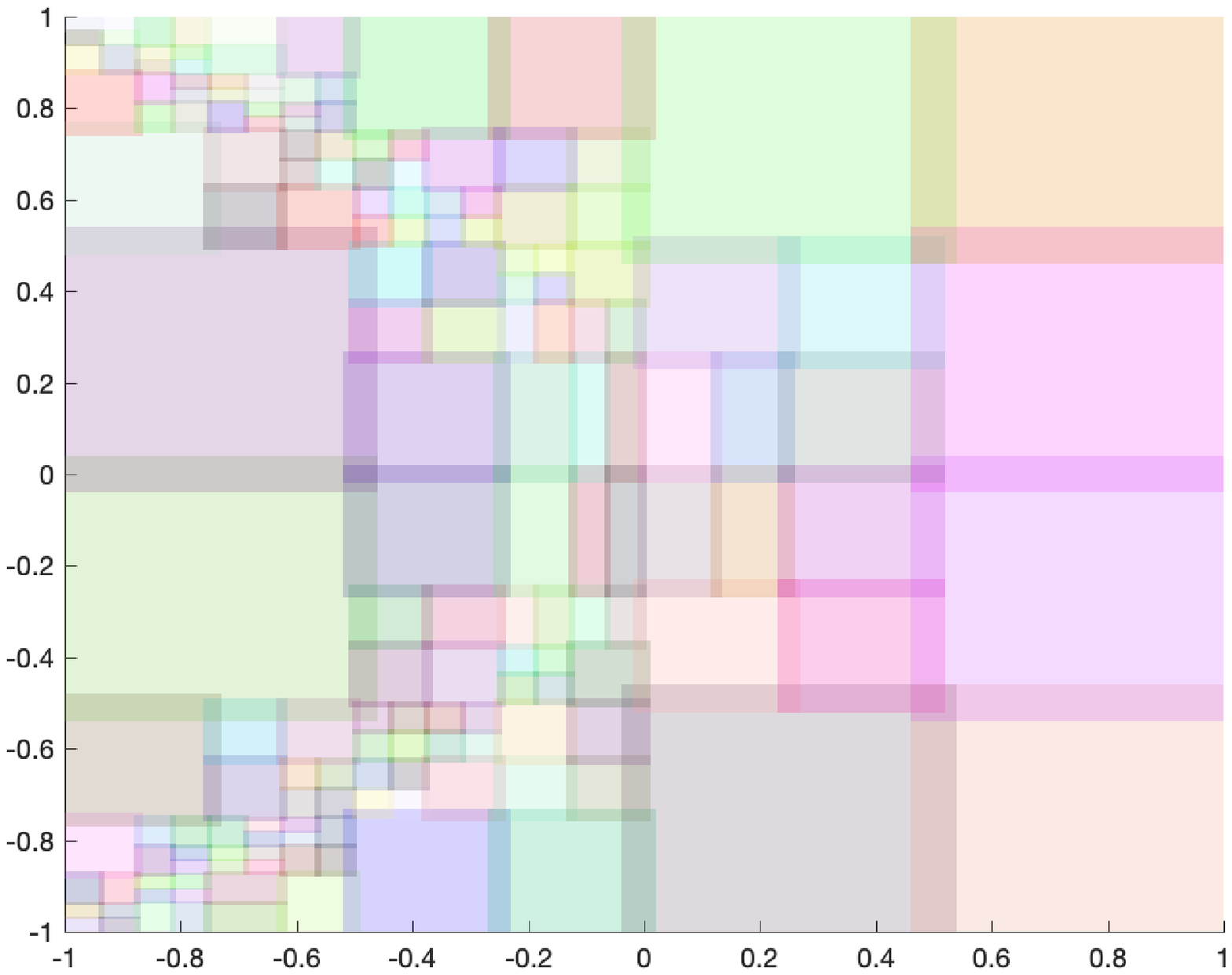}
    \label{tanfundomains}
  }
  \caption{Overlapping subdomains constructed by the adaptive tree method for a function with a nonlinear ``cliff.''}
  \label{TANFUN1}
\end{figure}

\begin{figure}
  \centering
  \subfloat[$\frac{10^{-4}}{(10^{-4}+x^2)(10^{-4}+y^2)}$]{
    \includegraphics[scale = 0.34]{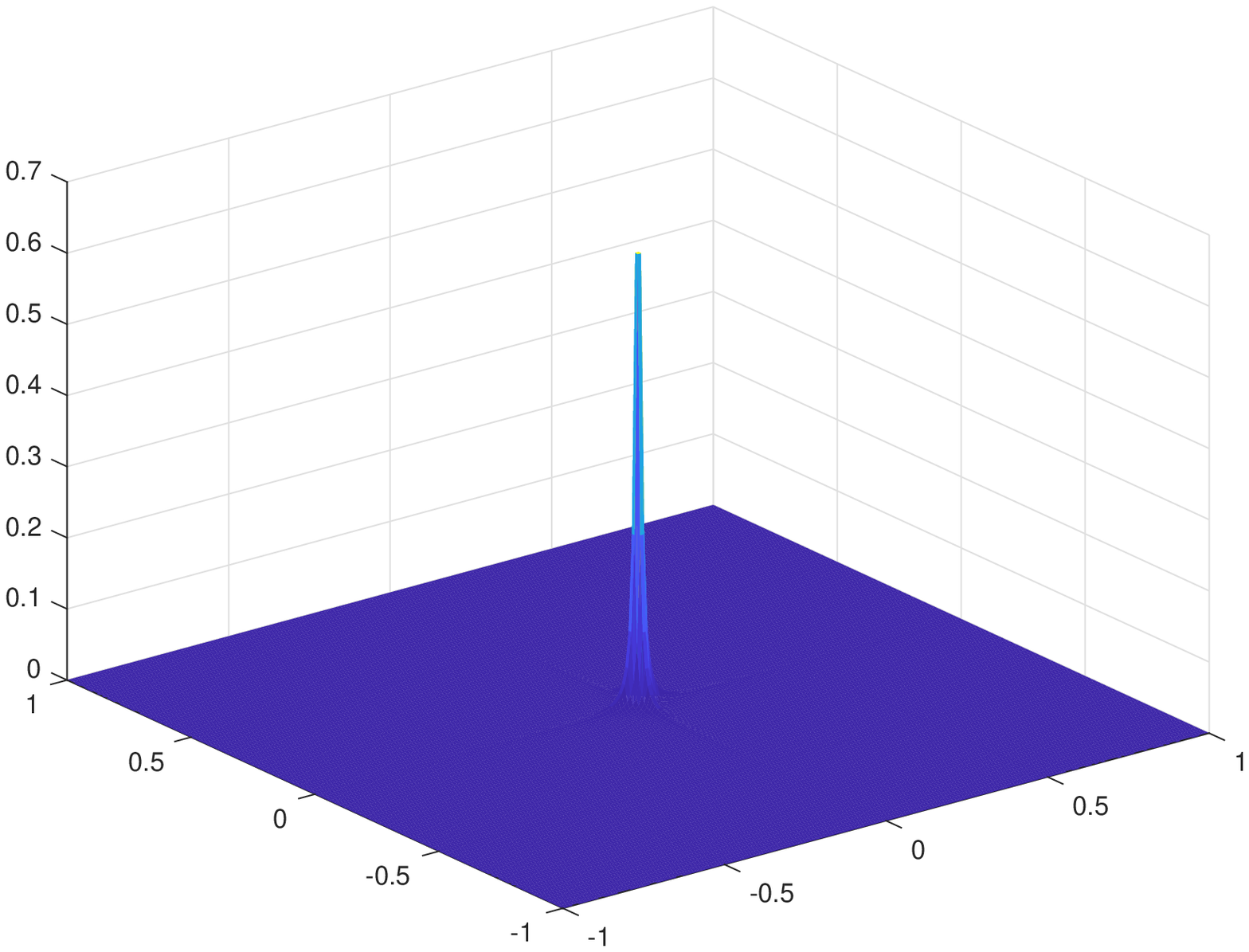}
    \label{rungefunplot}
  }
  \subfloat[Overlapping subdomains]{
    \includegraphics[scale = 0.34]{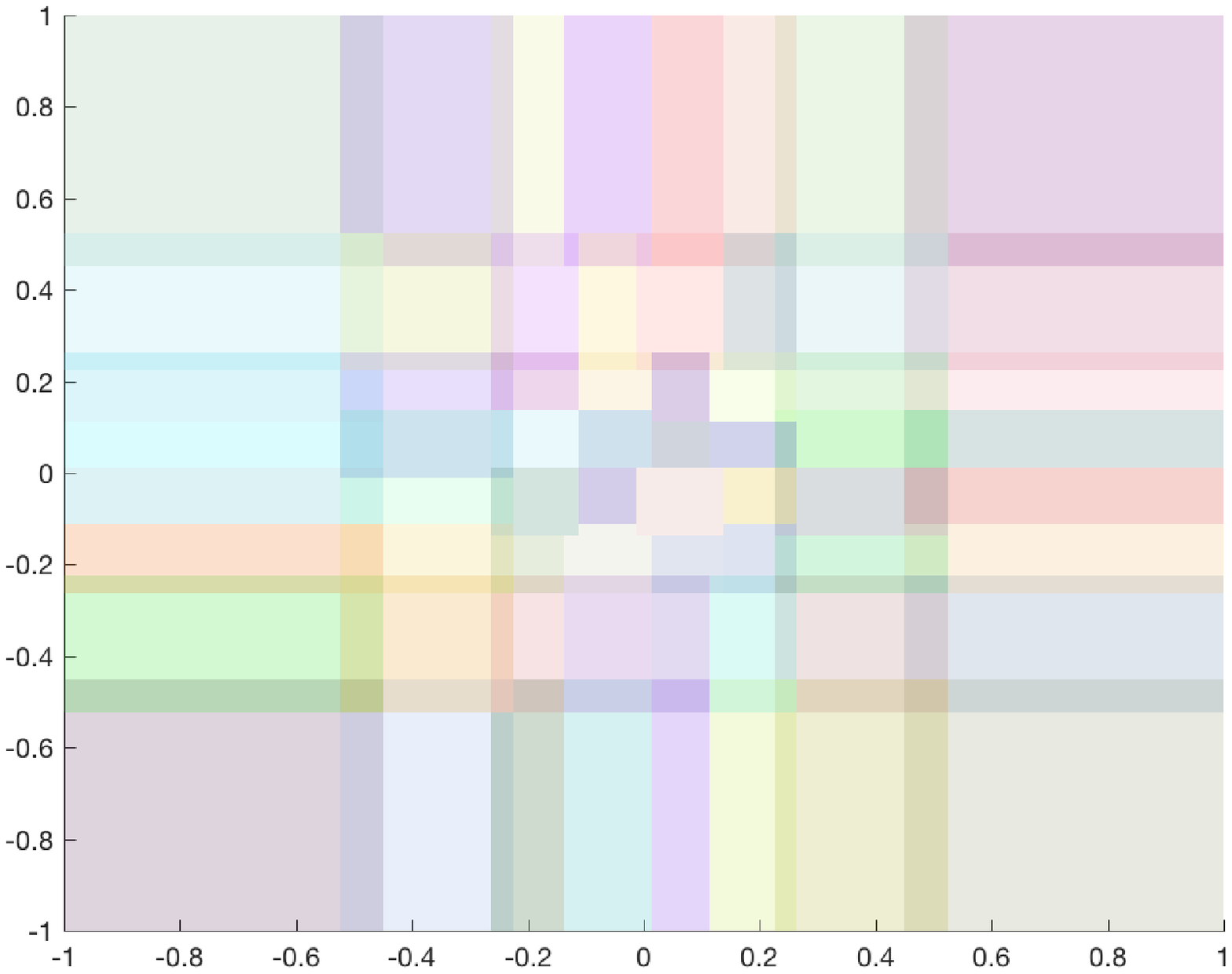}
    \label{rungefundomains}
  }
  \caption{Overlapping subdomains constructed by the adaptive tree method for a function with a sharp spike.}
  \label{rungeFUN1}
\end{figure}

One important aspect of low-rank approximation is that it is inherently nonisotropic. Consider the 2D ``plane wave bump'' 
\begin{equation}
f(x,y)=\arctan(250(\cos(t)x+\sin(t)y))
\label{rotate_func_2D}	
\end{equation}
whose normal makes an angle $t$ with the positive $x$-axis. We compare the construction times of our method to Chebfun2 for $t \in [0,\pi/4]$ in Figure~\ref{tan_rotate_2D}. We observe the execution time of Chebfun2 varying over nearly three orders of magnitude. While our method is also responsive to the angle of the wave, the variation in time is about half an order of magnitude, and our codes are faster in all but the rank-one case $t=0$ (for which both methods are fast).

\begin{figure}
\centering
\includegraphics[scale = 0.5]{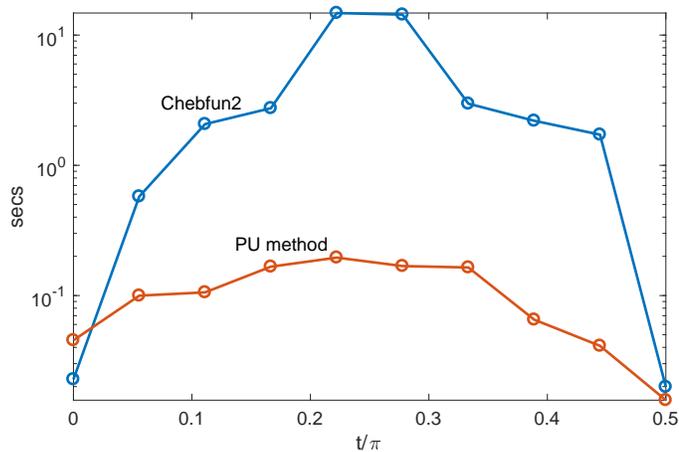}
\caption{Comparison of construction times for $\arctan(250(\cos(t)x+\sin(t)y))$ for $t \in [0,\pi/4]$.}
\label{tan_rotate_2D}
\end{figure}

Our next experiment is to add and multiply the rank-one function $\arctan(250x)$ to the plane wave in~(\ref{rotate_func_2D}). The construction time results are compared for $t \in [0,\pi/2]$ in Figure~\ref{TAN_ADD_MULT}. Here the dependence of Chebfun2 on the angle is less severe than in the simple construction, though it is still more pronounced than for our method. More importantly, the absolute numbers for addition in particular with Chebfun2 would probably be considered unacceptable for interactive computation, while our method takes one second at most. 

\begin{figure}
\centering
\includegraphics[scale = 0.5]{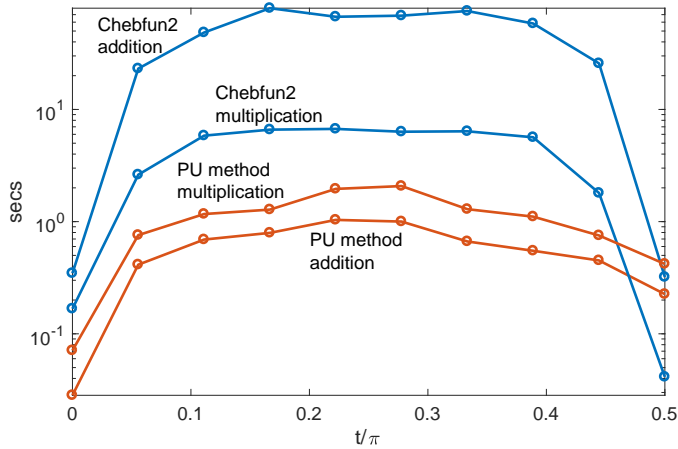}
\caption{Comparison of execution times for multiplication and addition of $\arctan(250x)$ with $\arctan(250(\cos(t)x+\sin(t)y))$ for $t \in [0,\pi/4]$.}
\label{TAN_ADD_MULT}
\end{figure}


\subsection{3D experiments}

We next test the 3D functions $1/(\cosh(5(x+y+z)))^2$, $\arctan(5(x+y)+z)$, and 3D versions of the smooth functions from the Genz family test package. Table~\ref{tab:timing3D} shows the construction time, the time taken to evaluate on a $200\times 200 \times 200$ grid, and the max error on this grid. We observe dramatic construction timing differences in every case: Chebfun3 outperforms the tree-based method for low-Tucker-rank functions, while for the two higher-rank cases, the tree-based method is the clear winner. Chebfun3 performance is more extreme in both senses, while the tree-based method is more consistent across these examples. Chebfun3 is also faster for evaluation overall, even in high-rank cases, though the evaluation times are typically far less than the construction times.

\begin{table}[p]
  \begin{tabular}{c|c|c|c|c|c}
    \multirow{2}{*}{Function} & \multirow{2}{*}{Alg.} & \multirow{2}{*}{Error} & Build & Eval & Points /  \\
 & &  & time & time &Rank  \\ \hline
    \multirow{2}{*}{$\cos(u_1\pi + \sum_{i=1}^3 a_i x_i)$} & T & $3.16 \times 10^{-14}$ &	2.958 &	0.240 &	561495 \Tstrut\\
    & C & $2.19 \times 10^{-14}$ 	& 0.460 &	0.036 &	2 \\ \hline
    \multirow{2}{*}{$\prod_{i=1}^3 (a_i^{-2}+(x_i-u_i)^2)^{-1}$} & T & $2.37 \times 10^{-15}$ 	& 9.917 & 0.764 & 7751626 \Tstrut\\
    & C & $2.63 \times 10^{-15}$	 &  0.148 &	0.030 &	1 \\ \hline
    \multirow{2}{*}{$(1+\sum_{i=1}^3 a_i x_i)^{-4}$} & T & $5.58 10^{-16}$ & 0.351 &	0.020 &	216 \Tstrut\\
    & C & $8.93 \times 10^{-16}$ &	0.174 &	0.021 & 5\\ \hline
    \multirow{2}{*}{$\exp(-\sum_{i=1}^2 a_i^2 (x_i-u_i)^2)$} & T & $1.45\times 10^{-15}$ &	0.566 &	0.097 &	293305 \Tstrut  \\
    & C & $7.80 \times 10^{-16}$ &	0.066 &	0.018 & 1 \\ \hline
    \multirow{2}{*}{$1/(\cosh(5(x+y+z)))^2$} & T & $2.00\times 10^{-15}$	 & 4.337	 & 0.325 &	3450018 \Tstrut\\
    & C & $3.66 \times 10^{-13}$ 	& 74.446	 &0.050 &	93 \\ \hline
    \multirow{2}{*}{$\arctan(5(x+y)+z)$} & T & $1.95 \times 10^{-15}$ &	0.758	& 0.145	& 1132326 \Tstrut\\
    & C & $3.17\times 10^{-13}$	& 75.313	 & 0.033	& 110
  \end{tabular}
  \caption{Observed error and wall-clock times for the tree-based (T) and Chebfun3 (C) algorithms with target tolerance $10^{-16}$ and $\nmax=65$. Build time is for constructing the approximation object, and eval time for evaluating an approximant on a $200^3$ uniform grid (all times in seconds). Also shown: for the tree-based method, the total number of stored sampled function values, and for Chebfun3, the numerically determined rank of the function. Here $u=[0.75,0.25,-0.75]$ and $a=[25,25,25]$.} 
  \label{tab:timing3D}
\end{table}

We repeat our experiment testing the importance of axes alignment using the function
\begin{equation}
\arctan(5(\sin(p)\cos(t)x+\sin(p)\sin(t)y+\cos(p)z))
\end{equation}
for $p,t \in [0,\pi/4]$. Timing results can be seen in Figure~\ref{fig:tan3D}. As in 2D, the Chebfun low-rank technique shows wide variation depending on the angles, and a large region of long times. The tree-based method is much less sensitive and faster (by as much as two orders of magnitude) except for the purely axes-aligned cases. 

\begin{figure}
  \centering
  \includegraphics[width=\textwidth]{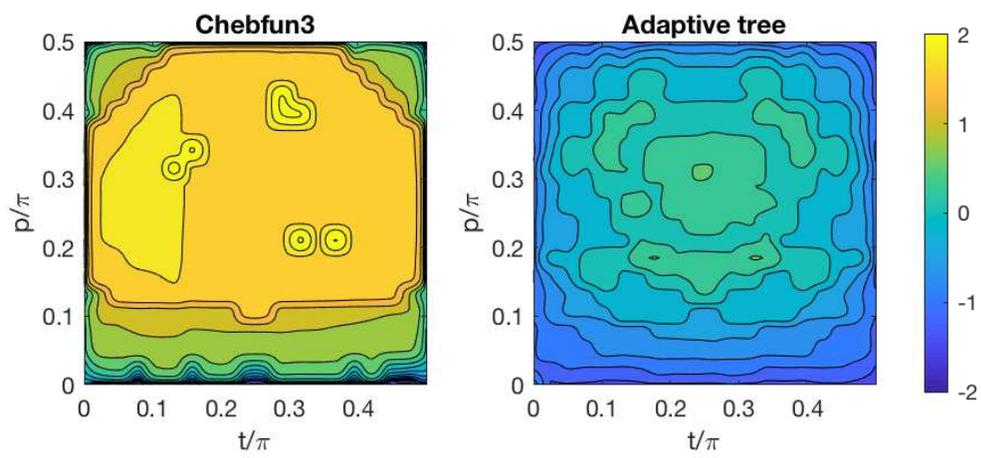}
  \caption{Construction time comparison for the 3D function $\arctan(5(\sin(p)\cos(t)x+\sin(p)\sin(t)y+\cos(p)z))$, with varying angles. Colors and contours correspond to the base-10 log of execution time in seconds.}
  \label{fig:tan3D}
\end{figure}

\section{Extension to nonrectangular domains}
\label{sec:general-domain}

We now consider approximation over a nonrectangular domain $\Omega\subset \R^d$. In our construction, a leaf node $\nu$ whose domain $\Omega_\nu$ lies entirely within $\Omega$ can be treated as before. However, if $\Omega_\nu\cap \Omega \subsetneq \Omega_\nu$, we use a different approximation technique on $\nu$. The refinement criteria of Algorithm~\ref{alg:refine} are also modified for this situation.

\subsection{Algorithm modifications}

On a leaf whose domain extends outside of $\Omega$, we again use a tensor-product Chebyshev polynomial as in~(\ref{eq:full-interp}), but choose its coefficient array $C$ by satisfying a discrete least squares criterion:
\begin{equation}
  \argmin_{C} \sum_{i=1}^{P} \lp f(\vect{x}_i) -  \tilde{p}(\vect{x}_i) \rp^2,
\end{equation}
where $\Xi = \{\vect{x}_i\}_{i=1}^{P} \subset \Omega_\nu\cap \Omega$ is a point set in the ``active'' part of the leaf's domain, $\Omega_\nu\cap \Omega$. In practice we can form a matrix $A$ whose columns are evaluations of each basis function at the points in $\Xi$, leading to a standard $P\times N^d$ linear least squares problem. We choose $\Xi$ as the part of the standard $(2N)^d$-sized Chebyshev grid lying inside $\Omega$. 

This technique resembles Fourier extension or continuation techniques~\cite{adcock2014resolution,huybrechs2010fourier}, so we refer to it as a \emph{Chebyshev extension approximation}. Unlike the Fourier case, however, there is no real domain extension involved; rather one constrains the usual multivariate polynomial only over part of its usual tensor-product domain. The condition number of $A$ in the Fourier extension case has been shown to increase exponentially with the degree of the approximation \cite{adcock2014numerical}, because the collection of functions spanning the approximation space is a \emph{frame} rather than a basis. We see the same phenomenon with Chebyshev extension; essentially, constraining the polynomial over only part of the hypercube leaves it underdetermined. To cope with the numerical rank deficiency of $A$, we rely on the basic least-squares solution computed by the MATLAB backslash. We found this to be as good as or better than the pseudoinverse with a truncated SVD. 

We modify Algorithm~\ref{alg:split} so that when a domain is split, the resulting zones of the children are shrunk if possible to just contact the boundary of $\Omega$. (An exception is the shared interface between the newly created children, which is fixed.) This helps to keep a substantial proportion of a leaf's domain within $\Omega$.

We also modify how refinement decisions are made and executed in Algorithm~\ref{alg:refine}, for a subtle reason. The original algorithm is able to exploit the very different resolution requirements for a function such as, say, $xT_{60}(y)$, by testing for sufficient resolution in each dimension independently and splitting accordingly. We find experimentally that if the function is like this over $\Omega$, the extension of it to the unconstrained part of the leaf node's domain has uniform resolution requirements in all variables. Therefore, we use a simpler refinement process: if the norm of the least-squares residual (normalized by $\sqrt{P}$) is not acceptably small, we split in all dimensions successively. In effect, the approximation becomes a quadtree or octree within those nodes that do not lie entirely within $\Omega$.

\subsection{Numerical experiments}

We chose the test functions 
\begin{equation}
  \label{eq:testfun-gen}
  \begin{aligned}
    g_1& =\exp(x+y), & g_2&=\dfrac{1}{((x-1.1)^2)+(y-1.1)^2)^2}, \\
    g_3 &=\cos(24x-32y)\sin(21x-28y), & g_4&=\arctan(3(x^2+y)).
  \end{aligned}
\end{equation}
We approximated each function on each of three domains: the unit disk, the diamond $|x|+|y|\le 1$, and the double astroid seen in Figure~\ref{star_plot}. The initial box (root of the approximation tree) was chosen to tightly enclose the given domain. For each test we set $N=17$ and the target tolerance to $10^{-10}$. We timed both the adaptive construction and the evaluation on a $200\times 200$ grid, and recorded the max error as in the previous section. In each case, we choose initial box to fit the domain as tightly as possible. These results can be seen in Table~\ref{table_general}. The resulting approximation of $g_4$ on the double astroid is shown in Figure~\ref{star_plot}, along with the adaptively found subdomains. 

When the function is smooth or contains localized features, we find that the method is both efficient and highly accurate; in the smoothest case of $g_1$, a global multivariate least-squares polynomial is sufficient. Only for $g_3$, which requires uniformly fine resolution throughout the domains, is there a construction time longer than a few seconds. The Fourier extension methods described in~\cite{matthysen2017function} are implemented in Julia, making a direct quantitative comparisons difficult, but based on the orders of magnitude of the results reported there, we feel confident that our results for these examples are superior.



\begin{figure}
\centering
\subfloat[Plot of $\arctan(3(y^2+x))$.]{
\includegraphics[scale = 0.34]{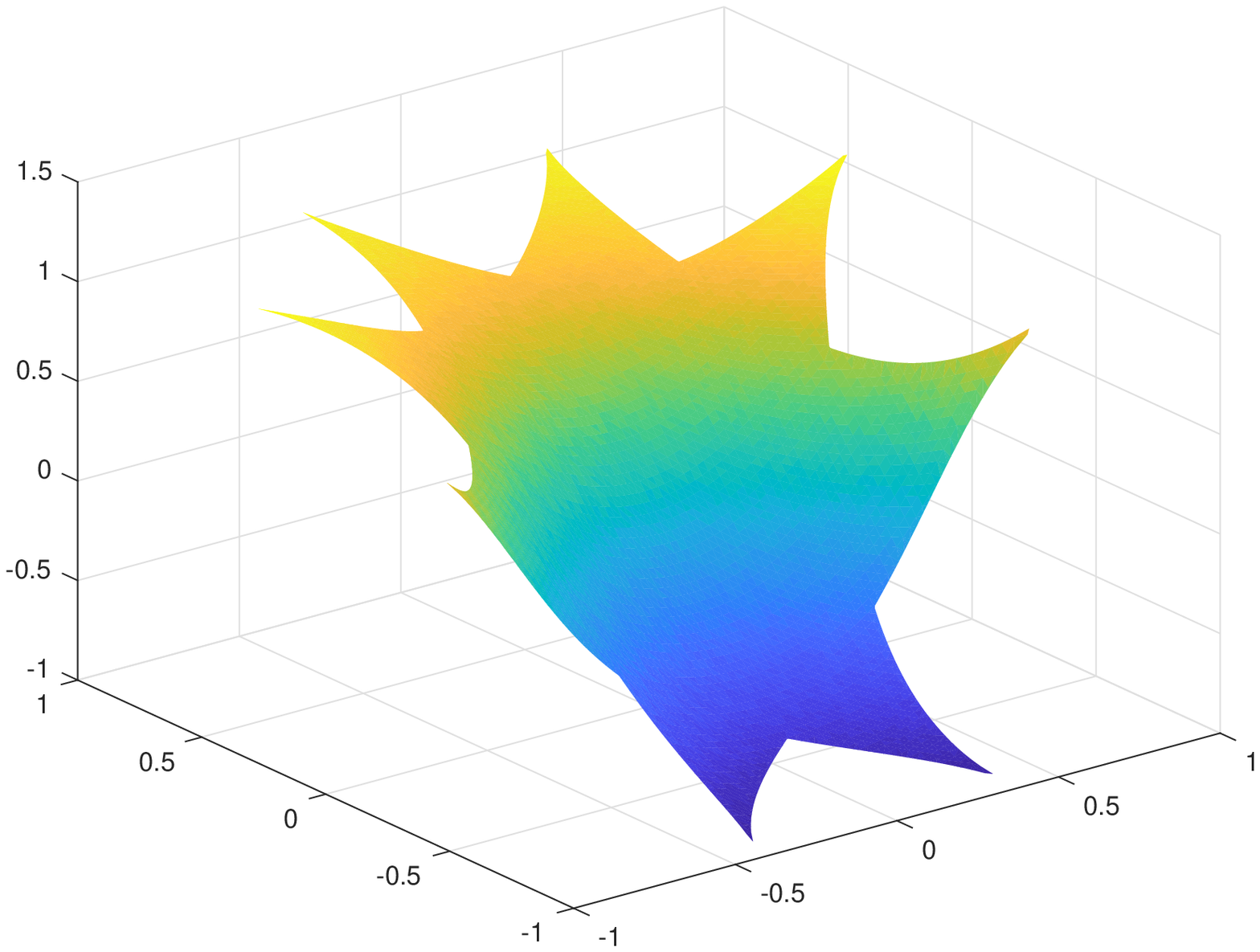}
   \label{star_plota}
 }
\subfloat[Plot of subdomains.]{
\includegraphics[scale = 0.34]{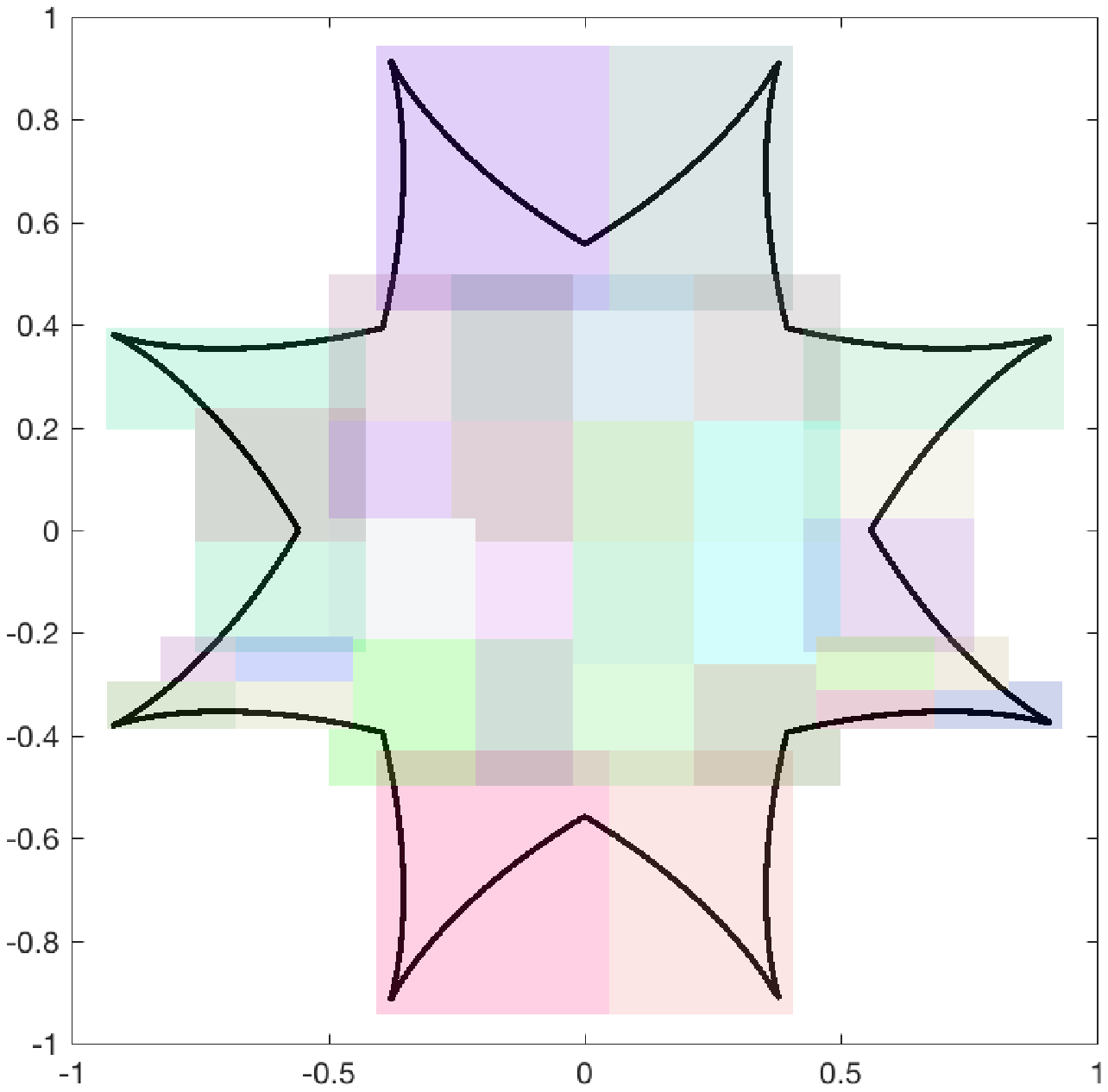}
   \label{star_plotb}
 }
\caption{Plot of $\arctan(3(y^2+x))$ and the subdomains formed from the partition of unity method. The error in this approximation was found to be about $10^{-11}$.}
\label{star_plot}
\end{figure}

\begin{table}
\begin{tabular}{c|c|c|c|c|c}
function & domain & error & construct time & interp time & points \\ [5pt] \hline
\multirow{3}{*}{ $g_1$ } & disk & 5.44E-15 & 1.369 & 0.012 & 289 \\
& diamond & 2.06E-11 & 0.040 & 0.002 & 289 \\
& astroid & 2.01E-08 & 0.071 & 0.001 & 289 \\ \hline
\multirow{3}{*}{ $g_2$ } & disk & 2.40E-10 & 2.558 & 0.117 & 3757 \\
& diamond & 2.40E-11 & 0.406 & 0.012 & 2023 \\
& astroid & 2.14E-10 & 1.511 & 0.023 & 4624 \\ \hline
\multirow{3}{*}{ $g_3$ } & disk & 4.44E-11 & 11.305 & 1.500 & 245650 \\
& diamond & 2.35E-11 & 10.894 & 0.854 & 178020 \\
& astroid & 1.67E-10 & 28.072 & 0.836 & 153780 \\ \hline
\multirow{3}{*}{ $g_4$ } & disk & 7.49E-11 & 1.866 & 0.059 & 12138 \\
& diamond & 1.45E-11 & 1.536 & 0.053 & 9826 \\
& astroid & 1.09E-11 & 3.221 & 0.049 & 9826 \\
\end{tabular}
  \caption{Observed error and wall-clock times for the adaptive tree method to approximate the functions given in~(\ref{eq:testfun-gen}) on three different 2D domains. Also shown is the total number of sampled function values stored over all the leaves of each final tree.}
  \label{table_general}
\end{table}

\section{Concluding remarks}
\label{sec:conclusion}

For functions over hyperrectangles of uncorrelated variables or that otherwise are well-aligned with coordinate axes, low-rank and sparse-grid approximations can be expected to be highly performant. We have demonstrated an alternative adaptive approach that, in two or three dimensions, typically performs very well on such functions but is far less dependent on that property. Our method sacrifices the use of a single global representation that could achieve true spectral convergence, but in practice we are able to use a partition of unity to construct a smooth, global approximation of very high accuracy in a wide range of examples. 

The adaptive domain decomposition offers some other potential advantages we have not yet exploited, but are studying. It offers a built-in parallelism for function construction and evaluation. It allows efficient updating of function values locally, rather than globally, over the domain. Finally, it has a built-in preconditioning strategy, based on additive Schwarz methods, for the solution of partial differential equations.

By replacing tensor-product interpolation on the leaves with a simple least-squares approximation using the same multivariate polynomials, we have been able to demonstrate at least reasonable performance in approximation over nonrectangular domains. Further investigation is required to better understand the least-squares approximation process, optimize adaptive strategies, and find efficient algorithms for merging trees and operations such as integration.

\bibliographystyle{siamplain}
\bibliography{PU_BIB}

\begin{appendices}

\section{Merging trees}
\label{sec:merge}

Algorithm~\ref{alg:merge} describes a recursive method for merging two trees $\ct_1$ and $\ct_2$, representing functions $f_1$ and $f_2$, into a tree representation for $f_1\circ f_2$, with $\circ$ as $+$, $-$, $\times$, or $\div$. The input arguments to the algorithm are the operation, corresponding nodes of $\ct_1$, $\ct_2$, and the merged tree, and the number $r$, which is the dimension that was most recently split in the merged tree. Initially the algorithm is called with root nodes representing the entire original domain, and $r=0$. 

We assume an important relationship among the input nodes. Suppose that \prop{zone}{$\nu_k$}=$\prod_{j=1}^d [\alpha_{kj},\beta_{kj}]$ for $k=1,2$, and that \prop{zone}{$\Tm$}=$\prod_{j=1}^d [A_{j},B_{j}]$. Then we require for $k=1,2$ that
\begin{equation}
  \label{eq:merge-zones}
  [a_{kj},b_{kj}]=[A_j,B_j] \quad \text{for all $j$ having} \quad   \prop{isdone}{\nu_k}_j=
  \text{FALSE}.
\end{equation}
This is trivially true at the root level. The significance of this requirement is that it allows us to avoid ambiguity about what the zone of $\Tm$ should be after a new split in, say, dimension $j$. Since only an uncompleted dimension can be split, the zone of the children of $\Tm$ after splitting will be identical to that of whichever (or both) of the $\nu_k$ requires refinement in dimension $j$.

For example, suppose the zones of $\nu_1$ and $\nu_2$ are $[-1,0]\times[-1,1]$ and $[-1,1]\times[0,1]$, respectively, and \prop{zone}{$\Tm$}=$[-1,0]\times[0,1]$. It is clear that we can interpolate from $\nu_1$ and $\nu_2$ onto $\Tm$. It is also clear that we can further split in $x$ in $\nu_1$, and in $y$ in $\nu_2$. But if we were to split $\nu_2$ in $x$, one of the children would have zone $[0,1]\times[0,1]$, which is inaccessible to $\nu_1$. 

Consider the general recursive call. If both $\nu_1$ and $\nu_2$ are leaves, then we simply evaluate the result of operating on their interpolants to get the values on $\Tm$. If exactly one of $\nu_1$ and $\nu_2$ is a leaf, then we split $\Tm$ the same way as the non-leaf and recurse into the resulting children; property~(\ref{eq:merge-zones}) trivially remains true in these calls. If both  $\nu_1$ and $\nu_2$ are non-leaves, and they both split in the same dimension, then we can split $\Tm$ in that dimension and recurse, and the zones will continue to match as in~(\ref{eq:merge-zones}).

The only remaining case is that $\nu_1$ and $\nu_2$ are each split, but in different dimensions. In this case we have to use information about how the splittings are constructed in Algorithm~\ref{alg:refine}. Recall that each unresolved dimension is split in order, while resolved dimensions are flagged as finished in all descendants. By inductive assumption, $\Tm$ was most recently split in dimension $r$. The algorithm determines which $\nu_k$ has splitting dimension $j$ that comes the soonest after $r$ (computed cyclically). Thus for all dimensions between $r$ and $j$, neither of the given nodes splits, so it and its descendants all must have \textsf{isdone} set to TRUE in those dimensions, and property~(\ref{eq:merge-zones}) makes no requirement. Furthermore, the dimension $r_k$ does satisfy~(\ref{eq:merge-zones}) for $\nu_k$, and the same will be true for its children and the children of $\Tm$. All other dimensions will inherit~(\ref{eq:merge-zones}) from the parents.

\newcommand{\op}{\ensuremath{\circ}}
\begin{algorithm}[!h]
\caption{merge(\op,$\nu_1$,$\nu_2$,$\num$,$r$)}
\label{alg:merge}
\begin{algorithmic}
\IF{$\nu_1$ and $\nu_2$ are leaves}
\STATE \textsf{values}($\num$):= \textsf{interpolant}($\nu_1$) $\op$ \textsf{interpolant}($\nu_2$), evaluated on \prop{grid}{$T_\text{merge}$}
\ELSIF{$\nu_1$ is a leaf \AND $\nu_2$ is not a leaf}
\STATE split($\num$,\prop{splitdim}{$\nu_1$})
\STATE merge(\op,$\nu_1$,\child{0}($\nu_2$),\child{0}($\num$),\prop{splitdim}{$\nu_2$})
\STATE merge(\op,$\nu_1$,\child{1}($\nu_2$),\child{1}($\num$),\prop{splitdim}{$\nu_2$})
\ELSIF{$\nu_1$ is not a leaf \AND $\nu_2$ is a leaf}
\STATE split($\num$,\prop{splitdim}{$\nu_1$})
\STATE merge(\child{0}($\nu_1$),$\nu_2$,\child{0}($\num$),\prop{splitdim}{$\nu_1$})
\STATE merge(\child{1}($\nu_1$),$\nu_2$,\child{1}($\num$),\prop{splitdim}{$\nu_1$})
\ELSE
\IF{\prop{splitdim}{$\nu_1$}=\prop{splitdim}{$\nu_2$}}
\STATE split($\num$,\prop{splitdim}{$\nu_1$})
\STATE merge(\op,\child{0}($\nu_1$),\child{0}($\nu_2$),\child{0}($\num$),\prop{splitdim}{$\nu_1$})
\STATE merge(\op,\child{1}($\nu_1$),\child{1}($\nu_2$),\child{1}($\num$),\prop{splitdim}{$\nu_1$})
\ELSE
\STATE $r_1 = (\text{\prop{splitdim}{$\nu_1$}}-r-1) \mod d$
\STATE $r_2 = (\text{\prop{splitdim}{$\nu_2$}}-r-1) \mod d$
\STATE If $r_1>r_2$, swap $\nu_1$ and $\nu_2$
\STATE split($\Tm$,\prop{splitdim}{$\nu_1$})
\STATE merge(\op,\child{0}($\nu_1$),$\nu_2$,\child{0}($\num$),\prop{splitdim}{$\nu_1$})
\STATE merge(\op,\child{1}($\nu_1$),$\nu_2$,\child{1}($\num$), \prop{splitdim}{$\nu_1$})
\ENDIF
\ENDIF
\end{algorithmic}
\end{algorithm}

\end{appendices}

\end{document}